\newtheorem{theorem}{Theorem}
\newaliascnt{lemma}{theorem}
\newaliascnt{proposition}{theorem}
\newtheorem{proposition}[proposition]{Proposition}
\newaliascnt{corollary}{theorem}
\newaliascnt{conjecture}{theorem}
\newaliascnt{example}{theorem}
\def\tagform@#1{\maketag@@@{\ignorespaces#1\unskip\@@italiccorr}}
\let\orgtheequation\theequation
\def\theequation{(\orgtheequation)}
\def\equationautorefname~{}
\newcommand{\arxiv}[1]{%
 \href{http://front.math.ucdavis.edu/#1}{ArXiv:#1}}
\newcommand{\mref}[1]{%
\href{http://www.ams.org/mathscinet-getitem?mr=#1}{#1}}
\newcommand{\R}{{\mathbb R}}
\newcommand{\Rd}{{\mathbb R}^d}
\begin{document}

\title{P\'{o}lya's conjecture fails for the fractional Laplacian}
\author[]{Mateusz Kwa\'{s}nicki, Richard S. Laugesen and Bart{\l}omiej A. Siudeja}
\address{Faculty of Pure and Applied Mathematics, Wroc{\l}aw University of Science and Technology, ul.\@ Wybrze\.ze Wyspia\'nskiego 27, 50-370 Wroc{\l}aw, Poland}
\address{Department of Mathematics, University of Illinois, Urbana,
IL 61801, U.S.A.}
\address{142 E 32nd Ave, Eugene, OR 97405, U.S.A.}
\email{mateusz.kwasnicki\@@pwr.edu.pl  Laugesen\@@illinois.edu  siudej\@@gmail.com}
\date{\today}

\keywords{Weyl asymptotic, fractional Sobolev, Berezin--Li--Yau inequality.}
\subjclass[2010]{\text{Primary 35P15. Secondary 35P20,35R11}}

\begin{abstract}
The analogue of P\'{o}lya's conjecture is shown to \emph{fail} for the fractional Laplacian $(-\Delta)^{\alpha/2}$ on an interval in $1$-dimension, whenever $0<\alpha<2$. The failure is total: every eigenvalue lies below the corresponding term of the Weyl asymptotic.

In $2$-dimensions, the fractional P\'{o}lya conjecture fails already for the first eigenvalue, when $0 < \alpha < 0.984$.
\end{abstract}

\maketitle

\subsection*{\bf Introduction}
The Weyl asymptotic for the $n$-th eigenvalue of the Dirichlet Laplacian on a bounded domain of volume $V$ in $\R^d$ says that
\[
\lambda_n \sim (nC_d/V)^{2/d} \qquad \text{as $n \to \infty$,}
\]
where $C_d = (2\pi)^d/\omega_d$ 
and $\omega_d=$ volume of the unit ball in $\Rd$. In $1$-dimension, ``volume'' means length and in $2$-dimensions it means area, so that $C_1 = \pi, C_2 =  4\pi$. P\'{o}lya suggested that the Weyl asymptotic provides more than a limiting relation. He conjectured that it gives a lower bound on each eigenvalue:
\[
\lambda_n \geq (nC_d/V)^{2/d} , \qquad n=1,2,3,\ldots .
\]
He proved this inequality for tiling domains \cite{Pol61}, but it remains open in general.  

In this note, we deduce from existing results in the literature that the analogue of 
P\'{o}lya's conjecture \emph{fails} for the fractional Laplacian $(-\Delta)^{\alpha/2}$ on the simplest domain imaginable --- an interval in $1$-dimension. In $2$-dimensions we show it fails on the disk and square, at least for some values of $\alpha$. 

\subsection*{\bf Fractional P\'{o}lya conjecture}
The fractional Laplacian $(-\Delta)^{\alpha/2}$ is a Fourier multiplier operator, with
\[
\big( (-\Delta)^{\alpha/2} u \big)\widehat{\ }(\xi) = |\xi|^\alpha \widehat{u}(\xi) , \qquad \alpha > 0 ,
\]
where the Fourier transform is defined by
\[
\widehat{u}(\xi) = \frac{1}{(2\pi)^{d/2}} \int_{\Rd} u(x) e^{-i x \cdot \xi} \, dx .
\]
The fractional Laplacian is known to have discrete Dirichlet spectrum on the bounded domain $\Omega \subset \R^d$, with weak eigenfunctions belonging to the fractional Sobolev space
\[
H^{\alpha/2}_0(\Omega) = \{ u \in H^{\alpha/2}(\Rd) : \text{$u=0$ a.e.\ on $\Rd \setminus \Omega$} \, \} .
\]
For further information on the fractional Sobolev space see \cite{DNPV12}; for the fractional Laplacian see \cite{Kwa}; and for the variational formulation of the spectrum see \cite{Fra}. 

Write $\lambda_n(\alpha)$ for the $n$-th eigenvalue of $(-\Delta)^{\alpha/2}$ on $\Omega$. The Weyl asymptotic (see \cite[Theorem~3.1]{Fra} and associated references) says that
\begin{equation} \label{eq:Weyl}
\lambda_n(\alpha) \sim (nC_d/V)^{\alpha/d} \qquad \text{as $n \to \infty$.}
\end{equation}
Thus the fractional analogue of the P\'{o}lya conjecture is the assertion that
\[
\lambda_n(\alpha) \geq (nC_d/V)^{\alpha/d} , \qquad n=1,2,3,\ldots .
\]
This inequality is what we shall disprove. 

\subsection*{\bf Fractional P\'{o}lya conjecture fails for the unit interval, for all eigenvalues}
In $1$-dimension on an interval of length $L$, the conjecture says $\lambda_n(\alpha) \geq (n\pi/L)^\alpha$. Equality holds when $\alpha=2$, the classical case of a vibrating string, but the equality is broken as soon as $\alpha$ drops below $2$, according to the next theorem.    
\begin{theorem}[Interval] \label{th:polyafalse}
Suppose $\Omega=(0,L)$ is an interval in $1$-dimension, and let $0<\alpha<2$. Then $\lambda_n(\alpha) < (n\pi/L)^\alpha$ for all $n$. 
\end{theorem}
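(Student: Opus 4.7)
The approach will combine the Courant--Fischer min-max principle, applied with the classical Dirichlet Laplacian eigenfunctions as trial functions, with Jensen's inequality for the strictly concave function $t\mapsto t^{\alpha/2}$. By scaling, the claim reduces to the case $L=1$, where it reads $\lambda_n(\alpha) < (n\pi)^\alpha$. The trial subspace will be $V_n := \mathrm{span}\{\phi_1, \ldots, \phi_n\}$, where $\phi_k(x) = \sqrt{2}\sin(k\pi x)$ are the Dirichlet Laplacian eigenfunctions on $(0,1)$, extended by zero to $\R$. Each such $\phi_k$ is Lipschitz (its derivative only jumps at the endpoints $x=0,1$), so it lies in $H^s(\R)$ for every $s<3/2$; in particular, $V_n \subset H^{\alpha/2}_0(0,1)$ for $0<\alpha<2$.

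The heart of the argument is the following observation. For $u = \sum_{k=1}^n c_k \phi_k \in V_n$, Parseval applied to $u$ and to $u'$ gives
\[
\int_\R |\widehat u(\xi)|^2\,d\xi = \sum_k c_k^2 = \|u\|_{L^2}^2, \qquad \int_\R \xi^2 |\widehat u(\xi)|^2\,d\xi = \sum_k (k\pi)^2 c_k^2 \leq (n\pi)^2 \|u\|_{L^2}^2.
\]
Thus $|\widehat u|^2\,d\xi/\|u\|_{L^2}^2$ is a probability measure on $\R$ whose second moment is at most $(n\pi)^2$. Strict concavity of $t\mapsto t^{\alpha/2}$ for $0<\alpha<2$, combined with Jensen's inequality, yields
\[
\frac{\int_\R |\xi|^\alpha |\widehat u|^2\,d\xi}{\|u\|_{L^2}^2} < \bigg(\frac{\int_\R \xi^2 |\widehat u|^2\,d\xi}{\|u\|_{L^2}^2}\bigg)^{\alpha/2} \leq (n\pi)^\alpha,
\]
with strict inequality because $|\widehat u|^2$ is real-analytic on $\R$ (Paley--Wiener) and hence positive almost everywhere, so $\xi^2$ is not a.s.\ constant under this probability measure.

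By the Courant--Fischer min-max principle, $\lambda_n(\alpha) \leq \sup_{u \in V_n \setminus \{0\}} \int_\R |\xi|^\alpha |\widehat u|^2\,d\xi \big/ \|u\|_{L^2}^2$. Since $V_n$ is finite-dimensional and this Rayleigh quotient is continuous on the unit sphere in $V_n$, the supremum is attained at some $u^\star$; the pointwise strict bound above, applied at $u^\star$, then delivers $\lambda_n(\alpha) < (n\pi)^\alpha$. The only step requiring genuine care is precisely this upgrade from a strict pointwise Jensen inequality to a strict inequality for the supremum over $V_n$, and it is resolved by finite-dimensionality and compactness of the unit sphere. The restriction $\alpha < 2$ is essential: at $\alpha=2$, Jensen becomes an equality (the function $t\mapsto t^{\alpha/2}$ is linear), reflecting the classical identity $\lambda_n(2) = (n\pi)^2$.
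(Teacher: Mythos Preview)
Your proof is correct and follows essentially the same approach as the paper: the paper deduces the theorem from a general spectral comparison $\lambda_n(\alpha)<\lambda_n(\beta)^{\alpha/\beta}$ (Proposition~2), proved via the min-max principle with trial space spanned by the first $n$ eigenfunctions of $(-\Delta)^{\beta/2}$ together with Jensen's inequality for $t\mapsto t^{\beta/\alpha}$; your argument is exactly the case $\beta=2$ on the interval, with the same trial space and the same Jensen step (phrased via the concave $t\mapsto t^{\alpha/2}$). You are a bit more explicit than the paper about why the strict pointwise Jensen inequality survives the maximum over the trial space---namely, compactness of the unit sphere in the finite-dimensional subspace---but otherwise the arguments coincide.
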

Hence the fractional P\'{o}lya conjecture fails on intervals, which contradicts a claim made about tiling domains (in all dimensions) in the literature \cite{YYY12}. See also our remark later in the paper about the square, which is a tiling domain in $2$ dimensions. 
\begin{proof}
The eigenvalues of the fractional Laplacian are known to be bounded above by powers of the usual Laplacian eigenvalues, with strict inequality: 
\[
\lambda_n(\alpha) < \lambda_n(2)^{\alpha/2} , \qquad n=1,2,3,\ldots ,
\]
whenever $0<\alpha<2$. See \autoref{pr:spectralcomparison} and the discussion at the end of the paper. 

On an interval in $1$-dimension this last inequality says $\lambda_n(\alpha) < (n\pi/L)^\alpha$, which proves the theorem.  
 
For an alternative proof when $\alpha=1$ that provides more explicit estimates, we recall an estimate of Kulczycki, Kwa\'{s}nicki, Ma{\l}ecki and Stos \cite[Theorem~6]{KKMS10}. It implies for the interval of length $L=2$ that
\[
\lambda_n(1) < \frac{n\pi}{L} - \frac{\pi}{40} 
\]
whenever $n \geq 4$. When $n=1,2,3$, those authors give the following numerical estimates \cite[Section 11]{KKMS10}: 
\[
\lambda_n(1) < 
\begin{cases}
1.16 , & n=1 , \\
2.76 , & n=2 , \\
4.32 , & n=3 .
\end{cases}
\]
Their 12 digit estimates have been rounded up to 2 decimal places. The numerical estimates obviously satisfy $\lambda_n(1) < n\pi/L$ for $n=1,2,3$, with $L=2$. 

A similarly explicit approach when $\alpha \neq 1$ proceeds through an asymptotic estimate of Kwa\'{s}nicki \cite[Theorem~1]{Kwa12}, which asserts that for the interval of length $L=2$, 
\[
\lambda_n(\alpha) = \Big( \frac{n\pi}{2} - \frac{(2-\alpha)\pi}{8} \Big)^{\! \alpha} + O \Big( \frac{1}{n} \Big) .
\]
Rearranging, we find
\[
\lambda_n(\alpha) = \Big( \frac{n\pi}{2} \Big)^{\! \alpha}
\Big( 1 - \frac{\alpha(2-\alpha)}{4n} + o(1/n) \Big) .
\]
Clearly the second factor on the right is less than $1$ for all large $n$, and so $\lambda_n(\alpha) < (n\pi/L)^\alpha$ for all large $n$. Thus once again we see P\'{o}lya's conjecture fails for the fractional Laplacian. 
\end{proof}

\subsection*{\bf Relation to Laptev's inequality of Berezin--Li--Yau type}
Laptev \cite[Corollary~2.3]{Lap97a} extended Berezin's eigenvalue sum inequality from the Laplacian to the fractional Laplacian, working on general domains and with an even more general class of operators. The resulting lower bound of ``Li--Yau'' form (see \cite[formula (4.2)]{Fra}) says for an interval in $1$-dimension that
\[
\Big( \frac{\pi}{L} \Big)^{\! \alpha} \frac{n^{1+\alpha}}{1+\alpha} \leq \sum_{k=1}^n \lambda_k(\alpha) , \qquad n=1,2,3,\ldots .
\]
For more information, see Frank's survey \cite[Theorem~4.1]{Fra}, and the improvements by Yildirim--Yolcu and Yolcu \cite[Theorem~1.4]{YYY13}, who strengthened the inequality with a lower order term.

Combining this lower bound by Laptev with the upper bound on individual eigenvalues from \autoref{th:polyafalse} yields a two-sided bound, which in the special case $\alpha=1$ has a particularly simple form:
\[
\frac{\pi}{2L} n^2 \leq \sum_{k=1}^n \lambda_k(1) < \frac{\pi}{2L} n(n+1) , \qquad n=1,2,3,\ldots .
\]

\subsection*{\bf Fractional P\'{o}lya conjecture fails for the unit disk, for the first eigenvalue}
Take $n=1$ and consider the unit disk in dimension $d=2$, which has area $\pi$. Then the corresponding term in the Weyl asymptotic \autoref{eq:Weyl} is $(1 \cdot C_2/\pi)^{\alpha/2} = 2^\alpha$. The next theorem shows that the fractional P\'{o}lya conjecture fails already for the first eigenvalue of the disk, when $\alpha$ is not too large. 
\begin{theorem}[Disk]
For the unit disk, $\lambda_1(\alpha) < 2^\alpha$ for all $\alpha \in (0,0.802)$.
\end{theorem}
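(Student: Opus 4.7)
The natural approach is variational: bound $\lambda_1(\alpha)$ from above by the Rayleigh quotient
\[
\lambda_1(\alpha) \leq \frac{\int_{\R^2} |\xi|^\alpha |\widehat{u}(\xi)|^2 \, d\xi}{\int_{\R^2} u(x)^2 \, dx},
\]
valid for any nonzero $u \in H^{\alpha/2}_0(B_1)$, with a trial function chosen so that the right-hand side falls strictly below $2^\alpha$ on $(0,0.802)$. Since both $\lambda_1(\alpha)$ and $2^\alpha$ equal $1$ at $\alpha=0$, only a quantitative argument will do.

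My first attempt would be the Getoor torsion function $u_\alpha(x) = (1-|x|^2)^{\alpha/2}_+$, attractive because it is the exact Dirichlet eigenfunction at $\alpha=2$ and satisfies the explicit identity $(-\Delta)^{\alpha/2} u_\alpha = 2^\alpha[\Gamma(1+\alpha/2)]^2$ on $B_1$. Combined with the polar integrals $\int_{B_1} u_\alpha \, dx = 2\pi/(\alpha+2)$ and $\int_{B_1} u_\alpha^2 \, dx = \pi/(\alpha+1)$, this yields the closed-form estimate
\[
\lambda_1(\alpha) \leq 2^\alpha \cdot 2 \bigl[\Gamma(1+\alpha/2)\bigr]^2 \frac{\alpha+1}{\alpha+2}.
\]
The two sides agree at $\alpha=0$, and the logarithmic derivative of the right-hand correction factor is $\tfrac12 - \gamma \approx -0.077 < 0$ there, so strict inequality holds on an initial interval. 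A numerical check shows this particular closed-form bound crosses $2^\alpha$ only near $\alpha \approx 0.7$, short of the stated threshold, so a refinement is needed.

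To reach the cutoff $0.802$ I would either widen to the power family $u_\beta(x)=(1-|x|^2)^\beta_+$ and optimize over $\beta$ using Dyda's closed-form expression for $(-\Delta)^{\alpha/2} u_\beta$ on the ball in terms of Jacobi/hypergeometric functions, or else use the first Dirichlet Laplacian eigenfunction $\phi(x) = J_0(j_{0,1}|x|)\mathbf{1}_{B_1}(x)$, whose Hankel transform is given by Lommel's integral as $\propto J_0(\rho)/(j_{0,1}^2-\rho^2)$; this reduces the Rayleigh quotient to a single convergent one-dimensional integral that is strictly sharper than the spectral comparison bound $j_{0,1}^\alpha$. The main obstacle is precisely this sharpness requirement: producing the \emph{qualitative} statement $\lambda_1(\alpha)<2^\alpha$ on \emph{some} interval $(0,\alpha_*)$ is a soft derivative-at-zero argument, but matching the specific numerical cutoff $0.802$ will require either careful calculus within the optimized power family or an appeal to the very accurate fractional-Laplacian eigenvalue computations for the ball developed by Dyda, Kuznetsov and Kwa\'snicki.
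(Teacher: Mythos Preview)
Your approach is exactly the paper's: a Rayleigh-quotient upper bound with an explicit trial function. Your torsion-function computation reproduces verbatim the paper's bound~(i) (due to Ba\~nuelos--Kulczycki),
\[
\lambda_1(\alpha)\le 2^{\alpha}\cdot\frac{2(\alpha+1)\Gamma(1+\tfrac{\alpha}{2})^2}{\alpha+2},
\]
and your observation that it fails near $\alpha\approx 0.7$ matches the paper's $0.699$. (One slip: $(1-|x|^2)_+$ is the \emph{torsion} function at $\alpha=2$, not the first eigenfunction, which is $J_0(j_{0,1}|x|)$.)

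Where you leave off is precisely where the paper's proof begins: to reach the stated threshold $0.802$ the paper does not optimize over the single-power family or use the Bessel trial function, but instead invokes the specific Dyda--Kuznetsov--Kwa\'snicki bound
\[
\lambda_1(\alpha)\le \frac{2^{\alpha-1}(\alpha+2)(7\alpha+24)\Gamma(\tfrac{\alpha}{2}+1)^2}{(\alpha+4)(\alpha+6)},
\]
obtained from a two-term trial space in the Jacobi/power family. The inequality $\text{(right side)}<2^\alpha$ is then verified rigorously by rewriting it as a sum of terms each convex in $\alpha$, so that checking the endpoints $\alpha=0$ (equality) and $\alpha=0.802$ (negative) suffices. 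Your proposal correctly anticipates that the refinement lives in the power/DKK family, but the actual proof requires this particular closed-form bound together with the convexity argument---neither the single-parameter optimization over $\beta$ nor the Bessel-eigenfunction route is shown to reach $0.802$, so as written the proposal is a correct outline that stops one concrete step short.
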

The theorem can be extended to $\alpha \in (0,0.984)$ provided one accepts  a numerical plot as part of the proof; see part (iii) below.  
\begin{proof}
We rely on several bounds from the literature for the unit ball in $\R^d$. 

(i) The first bound is the simplest, but handles only $\alpha \in (0, 0.699)$. By work of Ba{\~n}uelos and Kulczycki \cite[Corollary~2.2]{BK04},
\[
 \lambda_1(\alpha) \leq \frac{2^{\alpha + 1} \Gamma(\tfrac{\alpha}{2} + 1)^2 \Gamma(\tfrac{d}{2} + \alpha + 1)}{(d + \alpha) \Gamma(\alpha + 1) \Gamma(\tfrac{d}{2})} = \frac{2^{\alpha + 1} (\alpha + 1) \Gamma(\tfrac{\alpha}{2} + 1)^2}{\alpha + 2} 
\]
after substituting the dimension $d=2$. Plotting this bound shows that $\lambda_1(\alpha) < 2^\alpha$ when $\alpha \in (0, 0.699)$. We will not justify this claim rigorously, since part (ii) below gives an analytic proof for an even larger interval of $\alpha$-values. 

(ii) A somewhat stronger estimate by Dyda, Kuznetsov and Kwa{\' s}nicki, namely \cite[formula~(13)]{DKK15a}, says for $d=2$ that
\begin{equation} \label{eq:partii}
 \lambda_1(\alpha) 
\leq \frac{2^{\alpha-1} (\alpha + 2) (7 \alpha + 24) \Gamma(\tfrac{\alpha}{2} + 1)^2}{(\alpha + 4) (\alpha + 6)} .
\end{equation}
By plotting, we verify the desired inequality $\lambda_1(\alpha) < 2^\alpha$ on the larger interval $\alpha \in (0,0.802)$. This inequality can be checked rigorously, as follows: to show  the right side of \autoref{eq:partii} is less than $2^\alpha$ is equivalent to showing
\[
2 \log \Gamma (\tfrac{\alpha}{2} + 2) - \log \frac{\alpha+2}{7\alpha+24} - \log (\alpha+4) - \log (\alpha+6) + \log 2 < 0 .
\]
Each term on the left is convex as a function of $\alpha$, and so it suffices to check that the left side equals $0$ at $\alpha=0$ and is negative at $\alpha=0.802$, which is easily done. 

(iii) To get the desired inequality for the interval $\alpha \in (0,0.984)$, we apply an even stronger (and more complicated) bound of Dyda \cite[Section~5]{Dyd12}. It says for the unit ball that
\[
 \lambda_1(\alpha) \leq \frac{P - \sqrt{P^2 - Q R}}{2 R} 
\]
where the quantities are defined (when $d=2$) by
\begin{align*}
& \hspace*{1.3cm} P 
= \frac{2^{\alpha - 1} \pi^2 (\alpha + 4) (\alpha^2 + 3 \alpha + 6) \Gamma(\tfrac{\alpha}{2} + 1)^2}{(\alpha + 1) (\alpha + 3) (\alpha + 6)} , \\
Q 
= & \frac{4^{\alpha + 1} \pi^2 (\alpha + 2) \Gamma(\tfrac{\alpha}{2} + 1)^4}{\alpha + 6} ,
\qquad 
R = \frac{\pi^2 (\alpha + 4)^2}{4 (\alpha + 1) (\alpha + 2)^2 (\alpha + 3)} ;
\end{align*}
the above formulation is taken from \cite[formula~(12)]{DKK15a}. Substituting these values of $P,Q,R$ and then plotting as a function of $\alpha$ shows $\lambda_1(\alpha) < 2^\alpha$ when $\alpha \in (0,0.984)$. We do not attempt an analytic proof of this last inequality. 
\end{proof}

\subsection*{The square} To disprove P\'{o}lya's conjecture on the square $(-1,1) \times (-1,1)$ of sidelength $2$, it would suffice to show $\lambda_1(\alpha) < (C_2/4)^{\alpha/2} = \pi^{\alpha/2}$. Domain monotonicity of eigenvalues means it would be enough in fact to show the first eigenvalue of the unit disk (which lies inside the square) is less than $\pi^{\alpha/2}$. This last inequality can be verfied when $\alpha < 0.417$ by using the estimate in (iii) above. The simpler bound in (ii) suffices for the square when $\alpha < 0.298$, while the bound in (i) is not good enough for any $\alpha$, for this purpose.

Hence in $2$-dimensions, the fractional P\'{o}lya conjecture can fail even for a tiling domain, namely, the square.

\subsection*{\bf Concluding discussion} We have shown that the analogue of P\'{o}lya's conjecture fails for the fractional Laplacian. The conjecture is known to fail for another variant of the Laplacian too, the so-called magnetic Laplacian, by work of Frank, Loss and Weidl \cite{FLW09}. 

Thus any technique that might prove the original P\'{o}lya conjecture for the Dirichlet Laplacian must be rather special, because it must break down for both the magnetic Laplacian and the fractional Laplacian.

\section*{Appendix. Spectral comparison} \autoref{th:polyafalse} depended on the fact that the eigenvalues of the fractional Laplacian are bounded above by powers of the classical Laplacian eigenvalues. We give a direct proof of this fact in the next Proposition, and then discuss earlier work. The proof relies on Jensen's inequality and the Poincar\'{e} minimax characterization of eigenvalues, and it is new to the best of our knowledge. 
\begin{proposition} \label{pr:spectralcomparison}
The function $\alpha \mapsto \lambda_n(\alpha)^{1/\alpha}$ is strictly increasing when $\alpha>0$, for each $n \geq 1$. Hence $\lambda_n(\alpha) < \lambda_n(2)^{\alpha/2}$ when $0<\alpha<2$.
\end{proposition}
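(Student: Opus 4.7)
The plan is to combine the Poincaré--Courant minimax characterization of $\lambda_n(\alpha)$ with the monotonicity of $L^p$-norms on a probability space. Writing the quadratic form as
\[
\mathcal{E}_\alpha(u,u) = \int_{\R^d} |\xi|^\alpha |\widehat{u}(\xi)|^2 \, d\xi ,
\]
the minimax principle gives
\[
\lambda_n(\alpha) = \min_{\substack{V \subset H^{\alpha/2}_0(\Omega) \\ \dim V = n}} \; \max_{u \in V \setminus \{0\}} \frac{\mathcal{E}_\alpha(u,u)}{\|u\|_2^2}.
\]
The key observation is that for every $u \in L^2(\Omega)$ with $\|u\|_2 = 1$, Plancherel's theorem says that $d\mu_u(\xi) := |\widehat{u}(\xi)|^2 \, d\xi$ is a probability measure on $\R^d$, so that $\mathcal{E}_\alpha(u,u)^{1/\alpha} = \bigl\| \, |\xi| \, \bigr\|_{L^\alpha(\mu_u)}$.

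I would then invoke the standard fact that on a probability space the map $\alpha \mapsto \|g\|_{L^\alpha(\mu)}$ is nondecreasing for every nonnegative $g$, and is \emph{strictly} increasing unless $g$ is $\mu$-essentially constant. This is immediate from Jensen's inequality applied to the strictly convex function $x \mapsto x^{\beta/\alpha}$ for $\beta > \alpha > 0$. Applied here with $g(\xi) = |\xi|$, strict monotonicity could fail only if $\mu_u$ were concentrated on a sphere $\{|\xi|=r\}$. But $\mu_u$ is absolutely continuous with respect to Lebesgue measure, and each such sphere is Lebesgue-null, which would force $\widehat{u} \equiv 0$ and hence $u \equiv 0$. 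Therefore $\alpha \mapsto \mathcal{E}_\alpha(u,u)^{1/\alpha}$ is strictly increasing on $\alpha > 0$ for every nonzero $u \in L^2(\Omega)$.

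To finish, fix $0 < \alpha < \beta$ and let $V^*_\beta \subset H^{\beta/2}_0(\Omega) \subset H^{\alpha/2}_0(\Omega)$ be the span of the first $n$ Dirichlet eigenfunctions of $(-\Delta)^{\beta/2}$. Using $V^*_\beta$ as a trial subspace in the minimax characterization of $\lambda_n(\alpha)$, and letting $u^*$ be a maximizer of $\mathcal{E}_\alpha$ on the compact unit sphere of the finite-dimensional space $V^*_\beta$, the pointwise strict inequality of the previous paragraph yields
\[
\lambda_n(\alpha)^{1/\alpha} \leq \mathcal{E}_\alpha(u^*,u^*)^{1/\alpha} < \mathcal{E}_\beta(u^*,u^*)^{1/\beta} \leq \lambda_n(\beta)^{1/\beta}.
\]
Specializing to $\beta = 2$ gives $\lambda_n(\alpha) < \lambda_n(2)^{\alpha/2}$ for $0 < \alpha < 2$. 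The only mildly delicate point is the strictness of Jensen's inequality, which the one-line absolute continuity argument disposes of; the rest is Courant--Fischer plus compactness on the unit sphere of a finite-dimensional subspace.
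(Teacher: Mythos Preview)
Your argument is correct and is essentially the same as the paper's: both combine the minimax characterization with Jensen's inequality for $t\mapsto t^{\beta/\alpha}$ against the probability measure $|\widehat{u}|^2\,d\xi$, use the span of the first $n$ eigenfunctions of $(-\Delta)^{\beta/2}$ as the trial subspace (via $H^{\beta/2}_0\subset H^{\alpha/2}_0$), and obtain strictness from the fact that $|\widehat{u}|^2\,d\xi$ is absolutely continuous while spheres are Lebesgue-null. Your explicit appeal to compactness to pick a maximizer $u^*$ is the only cosmetic addition; the paper leaves that step implicit in its $\max$.
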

\begin{proof}
Suppose $0<\alpha<\beta<\infty$. Take $u \in H^{\beta/2}(\R^d)$ with $\int_{\R^d} |u|^2 \, dx = 1$, so that $\int_{\R^d} |\widehat{u}(\xi)|^2 \, d\xi = 1$ by Plancherel's identity. Then 
\[
\Big( \int_{\R^d} |\xi|^\alpha |\widehat{u}(\xi)|^2 \, d\xi \Big)^{\! \beta/\alpha} < \int_{\R^d} |\xi|^\beta |\widehat{u}(\xi)|^2 \, d\xi
\]
by Jensen's inequality applied with the strictly convex function $t \mapsto t^{\beta/\alpha}$ and with measure $d\mu(\xi) = |\widehat{u}(\xi)|^2 \, d\xi$, and where the inequality is shown to be strict by the following argument. If equality held then the equality conditions for Jensen would imply that $|\xi|^\alpha$ is constant $\mu$-a.e., meaning $\mu(|\xi| \neq c)=0$ for some constant $c$. Also the sphere $|\xi| = c$ has $\mu$-measure zero, and so we conclude $\mu \equiv 0$ and hence $\widehat{u} = 0$ a.e.\ with respect to Lebesgue measure. That contradiction shows that Jensen's inequality must hold strictly. 

Next, recall that the eigenvalues are characterized variationally \cite[p.~97]{B80}, with
\[
\lambda_n(\alpha) = \min_{S \in S_n(\alpha)} \max \Big\{ \int_{\R^d} |\xi|^\alpha |\widehat{u}|^2 \, d\xi : u \in S \text{\ with\ } \int_{\R^d} |u|^2 \, dx = 1 \Big\} 
\]
for $\alpha > 0$, where $S_n(\alpha)$ is the collection of all $n$-dimensional subspaces of $H^{\alpha/2}_0(\Omega)$. The minimum is attained when $S$ is spanned by the first $n$ eigenfunctions of $(-\Delta)^{\alpha/2}$. 

Choose $S \in S_n(\beta)$ to be the subspace of $H^{\beta/2}_0(\Omega)$ spanned by the first $n$ eigenfunctions of $(-\Delta)^{\beta/2}$. Then $S \in S_n(\alpha)$, just because $H^{\beta/2}_0(\Omega) \subset H^{\alpha/2}_0(\Omega)$, and so the variational characterization and strict Jensen inequality   imply that
\begin{align*}
\lambda_n(\alpha) 
& \leq \max \Big\{ \int_{\R^d} |\xi|^\alpha |\widehat{u}|^2 \, d\xi : u \in S \text{\ with\ } \int_{\R^d} |u|^2 \, dx = 1 \Big\} \\
& < \max \Big\{ \Big( \int_{\R^d} |\xi|^\beta |\widehat{u}|^2 \, d\xi  \Big)^{\! \alpha/\beta} : u \in S \text{\ with\ } \int_{\R^d} |u|^2 \, dx = 1 \Big\} \\
& = \lambda_n(\beta)^{\alpha/\beta} ,
\end{align*}
which completes the proof. 
\end{proof}

Earlier work proved the non-strict inequality $\lambda_n(\alpha) \leq \lambda_n(2)^{\alpha/2}$ for $\alpha=1$ \cite[Theorem~3.14]{BK04}, and for rational $\alpha \in (0,2)$ \cite[Theorem~1.3]{DeB04}, and for general $\alpha \in (0,2)$ \cite[Theorem~3.4]{CS05}. Further, $\alpha \mapsto \lambda_n(\alpha)^{1/\alpha}$ is continuous \cite[Theorem~1.3]{DMH07}, \cite[Example~5.1]{CS06}, and is increasing by work of Chen and Song \cite[Example~5.4]{CS05}, while \autoref{pr:spectralcomparison} shows it is strictly increasing.

A stronger result than \autoref{pr:spectralcomparison} is true when $0<\alpha<\beta=2$: the fractional Laplacian is bounded above as an operator by the $\alpha/2$-th power of the Dirichlet Laplacian. References for the non-strict version of this operator inequality are in Frank's survey paper \cite[Theorem~2.3]{Fra}. For the strict operator inequality, see the paper of Musina and Nazarov \cite[Corollary~4]{MN14}. 

Finally, \autoref{pr:spectralcomparison} and its proof by Jensen's inequality extend to eigenvalues of other families of operators, provided the corresponding Fourier multipliers are related by (strictly) convex transformations, just as $|\xi|^\alpha$ is related to $|\xi|^\beta$ by the transformation $t \mapsto t^{\beta/\alpha}$. Additionally, the result extends from eigenvalues to the more general ``inf--max'' values defined by a variational formula in the case of non-discrete spectrum, although the inequality is no longer strict in that case.

\section*{Acknowledgments}
This research was supported by grants from the Simons Foundation (\#204296 and \#429422 to Richard Laugesen) and the statutory fund of the Department of Mathematics, Faculty of Pure and Applied Mathematics, Wroc{\l}aw University of Science and Technology (Mateusz Kwa\'snicki).

The paper was initiated at the Stefan Banach Mathematical International Center (B\k{e}dlewo, Poland), during the 3rd Conference on Nonlocal Operators and Partial Differential Equations, June 2016. The authors are grateful for the financial support and hospitality received during the conference.

\end{document}